\newcommand{\norm}[1]{\ensuremath{\left\|#1\right\|}}
\newcommand{\abs}[1]{\ensuremath{\left\vert#1\right\vert}}
\newcommand{\ip}[2]{\ensuremath{\left\langle#1,#2\right\rangle}}
\newcommand{\R}{\mathbb{R}}
\newcommand{\avg}[1]{\langle #1 \rangle}
\newcommand{\eps}{\varepsilon}
\newcommand{\unit}{1\!\!1}
\newcommand{\esum}{K_{\eps}}
\numberwithin{equation}{section}
\newtheorem{thm}{Theorem}[section]
\newtheorem{lm}[thm]{Lemma}
\newtheorem{prop}[thm]{Proposition}
\newtheorem*{prop*}{Proposition}
\theoremstyle{remark}
\newtheorem*{rem*}{Remark}
 \theoremstyle{remark}
\numberwithin{equation}{section}
\title[Weak--Type Estimates Involving $A_\infty$ Maximal Functions] 
  {Borderline Weak--Type Estimates for Sparse Bilinear Forms
  Involving $A_\infty$ Maximal Functions}
 \subjclass[2010]{Primary:42B20  Secondary: 42B25}
\keywords{weak--type estimate, maximal function}
\author{Rob Rahm}
\address{Texas A\&M Mathematics}
\email{robrahm@math.tamu.edu}
\begin{document}

\begin{abstract}
For any operator $T$ whose bilinear form can be dominated by a sparse bilinear form, 
we prove that $T$ is bounded as a map from $L^1(\widetilde{M}w)$ into weak--$L^1(w)$. Our
main innovation is that $\widetilde{M}$ is a maximal function defined by directly using the 
local $A_\infty$ 
characteristic of the weight (rather than Orlicz norms). Prior results are due to 
Coifman\&Fefferman, P\'{e}rez, Hyt\"onen\&P\'erez, and Domingo-Salazar\&Lacey\&Rey. 
\end{abstract}

\maketitle
\section{Introduction}\label{S:intro}
We study weighted endpoint estimates for 
those operators whose bilinear form has a sparse domination. Our 
estimates are in the spirit of Fefferman--Stein \cite{FefSte1971}; in 
particular, for an operator $T$, a function $f$ and a non--negative weight $w$ 
(by non--negative weight we mean a non--negative locally integrable function)
we will prove:
\begin{align}\label{E:intMain}
\lambda w(\{Tf>\lambda\})
\lesssim \int_{\R^d}\abs{f(y)}M_{\varepsilon} w(y)dy, 
\end{align}
where $M_{\varepsilon}$ is a certain maximal function that is pointwise larger 
than the Hardy--Littlewood maximal function. We take an ``entropy bump'' 
point of view -- which is our main innovation -- and define 
$M_{\varepsilon}$ in terms of these entropy bumps. This innovation makes explicit use of 
the important $A_\infty$ characteristic and using this framework we can 
we can prove results of, for example, Hyt\"onen-P\'erez \cite{HytPer2015}.

We now prepare to state our main results. Recall that if $\mathcal{D}$ is a 
dyadic lattice,  
a subset $\mathcal{S}$ of $\mathcal{D}$ is said to be $t$--sparse (or simply ``sparse'') 
($0<t<1$) if every $Q\in\mathcal{S}$ there is a subset 
$E_Q\subset Q$ such that $\abs{E_Q}>t\abs{Q}$ and the sets 
$\{E_Q:Q\in\mathcal{S}\}$ are pairwise disjoint. 

When we say that $T$ has a bilinear sparse domination we mean that 
for all bounded and compactly supported functions $f_1,f_2$ there are 
$\mathcal{K} < \infty$ sparse sets such that there holds:
\begin{align}\label{D:sparseDom}
\abs{\ip{Tf_1}{f_2}}
\lesssim \sum_{k=1}^{\mathcal{K}}\sum_{Q\in\mathcal{S}_k}
  \abs{Q}\avg{f_1}_{Q}\avg{f_2}_{Q},
\end{align}
where $\avg{f}_{Q}:=\frac{1}{\abs{Q}}\int_{Q}\abs{f}$ (note the presence 
of the absolute value inside the integral). The exact value $\mathcal{K}$ isn't important for 
what we are doing, and is different for various operators; it should be pointed out that 
$\mathcal{K}$ depends on $T$ but not $f_1$ nor $f_2$. 

The class of operators that have such a sparse domination is vast. Any Calder\'on--Zygmund 
operator has such a domination, for example (\cite{Ler2013}). See also \cites{ConEtAl2016,ConRey2016,BerFreyPet2017}
and the references therein, for example. Thus, the theorem covers many operators from harmonic analysis.

Let $\eps:[1,\infty]\to[1,\infty]$ be an 
increasing function with $\esum:=\sum_{k=-1}^{\infty}\eps(2^{2^k})^{-1}<\infty$. (The example you should 
keep in mind is essentially $\eps(t) = (\log\log t)(\log \log \log t)^{1+\eps}$) For a 
cube $Q$ and a weight $w(x)\geq 0$ define:
\begin{align*}
\rho_w(Q):=\frac{1}{w(Q)}\int_{Q}M(\unit_{Q}w)(x)dx,
\end{align*}
where $M$ is the usual Hardy--Littlewood maximal function and 
$w(Q):=\int_{Q}w(y)dy$. For a collection $\mathcal{S}$ of cubes (e.g. a 
dyadic lattice or a sparse subset of a dyadic lattice) define the following
maximal function:
\begin{align*}
M_\eps w
:=\sup_{Q\in\mathcal{S}}\unit_{Q}\avg{w}_{Q}\log{\rho_w(Q))}
  \eps(\rho_w(Q)).
\end{align*}
These are our main theorems:
The first is a result of Hyt\"onen--P\'erez 
\cite{HytPer2013}*{Corollary 1.4}:
\begin{thm}\label{T:cor}
Let $T$ be an operator that has a sparse bilinear domination as in 
\eqref{D:sparseDom} and let $w$ be 
an $A_1$ weight. Then we have the following quantitative estimate:
\begin{align*}
\norm{T:L^1(w)\to L^{1,\infty}(w)}\lesssim
[w]_{A_1}\log{(e+[w]_{A_\infty})}, 
\end{align*}
where 
\begin{align*}
[w]_{A_1} 
:= \sup_{Q \textnormal{ a cube}} \frac{M(\unit_{Q}w)(x)}{w(x)}
\hspace{.1in}
\textnormal{and}
\hspace{.1in}
[w]_{A_\infty} 
:= \sup_{Q \textnormal{ a cube}}\frac{1}{w(Q)}\int_{Q}M(\unit_{Q}w)(x)dx.
\end{align*}
\end{thm}

The main theorem (and the new one) is: 

\begin{thm}\label{T:main}
Let $T$ be an operator that has a sparse bilinear domination as in 
\eqref{D:sparseDom} and let $\eps$ be a function as above. Then for 
any weight $w(x)\geq 0$ we have:
\begin{align*}
\norm{T:L^1(M_\eps w)\to L^{1,\infty}(w)}\lesssim\esum.
\end{align*}
\end{thm}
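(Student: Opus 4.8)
We will combine the sparse domination \eqref{D:sparseDom} with a stopping--time decomposition in which the local quantity $\rho_w$ serves as an ``entropy'' that pays for the overlap of the sparse cubes.

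\smallskip\noindent\textbf{Step 1: reduction to a single sparse sum.} By the usual linearization of a weak--type estimate it is enough to fix $\lambda>0$, a nonnegative bounded compactly supported $f$, and a compact set $E\subseteq\{|Tf|>\lambda\}$, and to prove $\lambda\,w(E)\lesssim\esum\int_{\R^d}f\,M_\eps w$. After splitting $E$ into at most four pieces on which $Tf$ has a fixed sign and applying \eqref{D:sparseDom} to the pair $(f,\unit_Ew)$ — for which $\avg{\unit_Ew}_Q=w(Q\cap E)/\abs{Q}$ — the matter reduces to bounding, for each of the $3^d$ sparse families $\mathcal S$,
\begin{align*}
\sum_{Q\in\mathcal S}\avg{f}_Q\,w(Q\cap E)\ \lesssim\ \esum\int_{\R^d}f\,M_\eps w\ +\ \frac{\lambda\,w(E)}{2\cdot3^d},
\end{align*}
the error term on the right being absorbed into the $\lambda w(E)$ we are trying to estimate. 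We may assume throughout that $\rho_w(Q)<\infty$ for every cube occurring, since otherwise $M_\eps w\equiv\infty$ there and those terms are harmless.

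\smallskip\noindent\textbf{Step 2: stopping cubes and a Calder\'on--Zygmund split.} Fix $\mathcal S$. A Calder\'on--Zygmund selection at a height $\delta\lambda$ ($\delta$ small, dimensional) with stopping cubes $\{P_i\}$ and $\Omega=\bigcup_iP_i$ splits the sparse sum into the part carried by cubes $Q\subseteq\Omega$ and the part carried by cubes meeting $\Omega^c$; for the latter $\avg f_Q\lesssim\delta\lambda$, and a routine packing argument routes this part into the absorbable error. Inside each $P_i$ we then build principal cubes $\mathcal F$: recursively adjoin the maximal $Q\subsetneq P$ in $\mathcal S$ at which one of $\avg f_Q$, $\avg w_Q$, $w(Q\cap E)/w(Q)$ exceeds twice its value on the minimal principal cube $P$ containing $Q$. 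Along each chain up to its principal ancestor $\pi(Q)$ all three quantities are comparable, so the remaining sum is controlled by $\sum_{P\in\mathcal F}\avg f_P\,\frac{w(P\cap E)}{w(P)}\sum_{\pi(Q)=P}w(Q)$.

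\smallskip\noindent\textbf{Step 3: the entropy estimate (crux).} Fix a principal cube $P$ and partition the cubes $Q$ with $\pi(Q)=P$ by the dyadic block of their entropy: $Q\in\mathcal C_k(P)$ when $2^{2^k}\le 1+\log\rho_w(Q)<2^{2^{k+1}}$. On $\mathcal C_k(P)$ the definition of $M_\eps w$ gives $M_\eps w\gtrsim 2^{k}\,\eps(2^{2^k})\,\avg w_Q$ on $Q$, hence $\avg f_Q\,w(Q)\lesssim \big(2^{k}\eps(2^{2^k})\big)^{-1}\int_Qf\,M_\eps w$. The matching ingredient is a Carleson--type packing bound showing that the entropy--$k$ cubes sit inside $P$ with overlap $\lesssim 2^k$, i.e. $\sum_{Q\in\mathcal C_k(P)}\int_Qf\,M_\eps w\lesssim 2^{k}\int_Pf\,M_\eps w$; this is precisely where the \emph{local $A_\infty$} nature of $\rho_w$ is used, through a John--Nirenberg/$L\log L$ self--improvement for $M(\unit_Qw)$ — the identity $\frac1{w(Q)}\int_QM(\unit_Qw)=\rho_w(Q)$ being the only structural information available on the (otherwise arbitrary) weight. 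Multiplying the two displays the factors $2^k$ cancel, leaving $\sum_{Q\in\mathcal C_k(P)}\avg f_Q\,w(Q\cap E)\lesssim \eps(2^{2^k})^{-1}\int_Pf\,M_\eps w$; summing in $k$ gives $\esum$, and summing over $P$ (using $w(P\cap E)/w(P)\le1$ and the Carleson structure of $\mathcal F$, together with the disjointness of the $P_i$) yields the desired bound modulo the absorbed error. Finally sum the $3^d$ families.

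\smallskip\noindent\textbf{Main obstacle.} The genuine difficulty is that $w$ is an \emph{arbitrary} nonnegative weight, so no weighted Carleson embedding and no reverse H\"older inequality are available; everything rests on quantifying the slogan ``$\rho_w(Q)$ measures the failure of $w$ to be $A_\infty$ on $Q$'' and extracting from it exactly a logarithmic--in--$\rho_w$ improvement of the packing of sparse cubes, an improvement which the weight $M_\eps w$, via the factor $(1+\log\rho_w)\,\eps(\rho_w)$, is calibrated to afford. Making the $L\log L$ self--improvement quantitatively sharp enough that the resulting series converges against $\esum$ — rather than costing an extra power of $\log\rho_w$ — is the technical heart of the argument.
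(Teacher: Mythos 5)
Your overall intuition (stratify the sparse cubes by the size of $\rho_w$, and let the factor $\log\rho_w\,\eps(\rho_w)$ in $M_\eps w$ pay for the pile-up at that entropy level) matches the paper, but the two steps that carry all the weight in your argument are asserted rather than proved, and neither survives scrutiny. First, in Step 2 you claim that the cubes meeting $\Omega^c$, for which $\avg{f}_Q\lesssim\delta\lambda$, can be ``routed into the absorbable error'' $\tfrac{\lambda w(E)}{2\cdot 3^d}$ by a ``routine packing argument.'' That would require $\sum_Q w(Q\cap E)\lesssim w(E)$ over those sparse cubes, i.e.\ a $w$-Carleson property of the sparse family for an arbitrary weight $w$, which is false: take $w$ concentrated on a tiny set lying in an arbitrarily long nested chain of cubes of the sparse family, with $E$ containing that set; then $w(Q\cap E)$ does not decay along the chain and the sum is unbounded. (If such an absorption were routine, one would essentially get the weak-type bound with $Mw$ alone, contradicting Reguera--Thiele.) Second, the ``crux'' packing bound of Step 3, $\sum_{Q\in\mathcal{C}_k(P)}\int_Q f\,M_\eps w\lesssim 2^k\int_P f\,M_\eps w$, is attributed to a John--Nirenberg/$L\log L$ self-improvement but never proved, and it is false as stated: already for $w\equiv 1$ (so $\rho_w(Q)\simeq 1$ for every $Q$, all cubes in the lowest entropy block) a sparse family may contain an arbitrarily long nested chain, and if $f$ is an $L^1$-normalized bump near the common corner of the chain then $\int_Q f\,M_\eps w$ is essentially constant along the chain, so the left side grows linearly in the chain length with no bound in terms of $2^k$. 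Fixed entropy does not limit the depth of nesting, so no local Carleson estimate of this kind, against the measure $f\,M_\eps w\,dx$ (or against $w$), is available. There is also a calibration slip: your blocks $2^{2^k}\le 1+\log\rho_w(Q)<2^{2^{k+1}}$ force $\log\rho_w\simeq 2^{2^k}$ and $\eps(\rho_w)\simeq\eps\bigl(2^{2^{2^k}}\bigr)$, not $2^k$ and $\eps(2^{2^k})$, so even the bookkeeping against $\esum=\sum_k\eps(2^{2^k})^{-1}$ does not match.

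The paper's mechanism is genuinely different at exactly these two points, and it is worth internalizing why. It never attempts a local packing estimate: it dualizes the weak-type norm so that one may discard a minor subset, builds the exceptional set $H=\{$cubes with $\avg{f}_Q>4\cdot 3^d w(G)^{-1}\}$ using the Fefferman--Stein Lemma \ref{L:fefst} for the sparse maximal operator (so $w(H)\le\tfrac14 w(G)$ and $G'=G\cap H^c$ is a major subset), and then stratifies the remaining cubes both by $\rho_w(Q)\in(2^{2^r},2^{2^{r+1}}]$ and by $\avg{f}_Q\simeq 4^{-k}w(G)^{-1}$. For $k\lesssim 2^r$ one simply sums $\sim 2^r$ terms, each controlled via the disjoint sets $E_Q$ by $\int\abs{f}M_{\mathcal S}w$, and the factor $2^r\simeq\log\rho_w$ is exactly what $M_\eps w$ absorbs, leaving $\eps(2^{2^r})^{-1}$. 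For $k\gtrsim 2^r$ each cube is split at depth $t=2^k$ in the sparse tree: the deep part is controlled by the Hyt\"onen--P\'erez $A_\infty$ Lemma \ref{L:ainfest}, $w(Q_t)\lesssim 2^{2^r}2^{-k}w(Q)$, and the shallow part by disjointness of the $E_{Q'}$ together with the global cancellation $\avg{f}_Q\,t\,w(G')\lesssim 4^{-k}2^k w(G)^{-1}w(G)=2^{-k}$. Note that the smallness of $\avg{f}_Q$ is played off against $w(G')\le w(G)$ globally --- not against any local Carleson property --- and this is precisely the device your proposal is missing. To repair your argument you would have to replace both the absorption in Step 2 and the packing claim in Step 3 with this dual/major-subset accounting (or prove a currently unavailable entropy-Carleson embedding), so as written the proof has a genuine gap at its central step.
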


The paper is organized as follows. In the next section, we discuss the 
main result and give some motivation. Following that, in Section \ref{S:bgap} we 
give some background information and preliminary information and then in 
Section \ref{S:pomt} we prove Theorems \ref{T:main} and 
\ref{T:cor}. 

\subsection*{Acknowledgment} I am grateful to a very thorough anonymous referee who not only 
caught many typos, mistakes, etc, but also gave constructive suggestions to 
make the exposition better. Of course, any missteps are my fault.

\section{Discussion of Main Results and Previous Results}\label{S:domr}
For the remainder of the paper, the function $\log t$ is the function 
that satisfies $2^{\log t} = 2+t$. That is, the $\log$ we're using here 
is really $\log t = \log_{2}(2+t)$. 

One would like to replace $M_{\varepsilon}$ in \eqref{E:intMain} with 
the smaller Hardy--Littlewood maximal function $M$. However, this is 
not possible; see for example \cites{Reg2011,RegThi2012,HoaMoen2016}. It is of 
interest then to determine the smallest maximal function for which 
\eqref{E:intMain} holds. 

Observe that one way to write $\avg{w}_{Q}$ is $\norm{w}_{L^1(\frac{dx}{\abs{Q}})}$.
Thus to make $M$ slightly larger, we can choose a norm that is slightly larger 
than the normalized $L^1$ norm. A common approach has been to use Orlicz norms.
That is, given a positive non--decreasing function $\Phi$ define:
\begin{align*}
\norm{w}_{Q,\Phi}
:=\inf\{\lambda>0:\frac{1}{\abs{Q}}
  \int_{Q}\Phi(\frac{w(x)}{\lambda})dx\leq 1\}.
\end{align*}

When $\Phi(t)=t^{r}$, then $\norm{w}_{Q,\Phi}=\norm{w}_{L^r(\frac{dx}{\abs{Q}})}$
which is bigger than normalized $L^1$ norm for $1<r$. Maximal functions created 
from these ``power bumps'' were studied in \cite{CoiFef1974}. In 1994, 
P\'erez shows that for singular integral operators (in fact maximal truncations),
 \eqref{E:intMain} holds when $\widetilde{M}$ is the maximal function 
based on $\Phi(t)=t(\log{t})^{1+\eps}$ and this was result was recently 
quantified (in terms of $\eps$) by Hyt\"onen--P\'erez \cites{Per1994,HytPer2015}.
The best known result so far is due to 
Domingo-Salazar, Lacey, and Rey \cite{DomLacRey2016} where 
$\Phi(t)=(\log{\log{t}})(\log{\log{\log{t}}})^{1+\eps}$.  See the papers
listed in the references for more detailed information about these maximal 
functions and Orlicz norms.

In this paper, we take a slightly different approach and use the 
so--called ``entropy bumps'' introduced by Treil--Volberg \cite{TreVol2016}.
More precisely we consider an increasing function $\eps:[1,\infty]\to[1,\infty]$ 
that is just barely summable in the sense that $\esum:=\sum_{k\geq 1}\eps(2^{2^k})^{-1}
<\infty$. For a cube $Q$ we define:
\begin{align*}
\norm{w}_{Q,\rho\eps(\rho)}
:=\avg{w}_{Q}\rho_w(Q)\eps(\rho_{w}(Q)).
\end{align*}
In \cite{TreVol2016} it is shown that  
$t\mapsto \Phi(t)/(t\log{t})$ is increasing for sufficiently large $t$
and $\int^{\infty}\frac{dt}{\Phi(t)}<\infty$, 
then there is a function $\eps$ as above with $\norm{w}_{Q,\rho\eps(\rho)}
\leq\norm{w}_{Q,\Phi}$.

We will use entropy norms that are smaller than the entropy norms defined above.
In particular we will use the following:
\begin{align*}
\norm{w}_{Q,(\log{\rho})\eps(\rho)}
=\avg{w}_{Q}\log{(\rho_w(Q))}\eps(\rho_w(Q)).
\end{align*}

The result mentioned above in \cite{TreVol2016} does not imply that these entropy 
bumps are smaller than the ones in \cite{DomLacRey2016}, and, in fact, the 
Orlicz and entropy maximal functions are probably not directly comparable. One advantage 
of the entropy bumps is that they explicitly use $A_\infty$ data on the 
weight $w$ and it has become clear that this data is important. An advantage of 
the Orlicz norm is of a qualitative nature: the results in \cite{DomLacRey2016} require 
just a bit more than local $L\log\log L$ integrability of the weight, while the 
entropy bumps require local $L\log L$ integrablilty (since $\rho_w(Q) \simeq 
\norm{w}_{L\log L(Q)}$).

The proof(s) in this paper are modifications of the proofs in 
\cites{DomLacRey2016, CulDipOu2016} to the present setting.

\section{Background Information and Preliminaries}\label{S:bgap}

In the proof of the theorems, we will need the collections to satisfy the 
following stronger condition: for every $Q\in\mathcal{S}$ there holds:
\begin{align}\label{E:tradSp}
\abs{\cup_{Q'\in\mathcal{S}:Q'\subsetneq Q}Q'}
\leq \frac{1}{4}\abs{Q}.
\end{align}
The following lemma says that every sparse collection is a finite union of 
sparse collections that satisfy this stronger condition.

\begin{lm}\label{L:stronger}
Let $\mathcal{S}$ be $2^{-l}$--sparse for some $l\in\mathbb{N}$ that 
contains a "top cube" (i.e. a cube that contains all other 
cubes in $\mathcal{S}$). 
Then $\mathcal{S}=\cup_{i=1}^{2^{l+2}}\mathcal{S}^i$ 
where each $\mathcal{S}^i$ satisfies the stronger condition 
\eqref{E:tradSp}.
\end{lm}

\begin{proof}
The sparse condition implies the ``Carleson'' condition (recall that if 
$Q'\in\mathcal{S}$ then $\abs{E_{Q'}}>2^{-l}\abs{Q'}$): 
for every $Q\in\mathcal{S}$ we have:
\begin{align*}
\sum_{Q'\in\mathcal{S}:Q'\subsetneq Q}\abs{Q'}
\leq 2^l\sum_{Q'\in\mathcal{S}:Q'\subsetneq Q}\abs{E_{Q'}}
\leq 2^l\abs{Q}.
\end{align*}
For a $Q\in\mathcal{S}$ define $\mathcal{S}_1(Q)$ be those cubes in $\mathcal{S}$ 
that are contained in $Q$ and are maximal in $Q$ (i.e. $Q'$ is in $\mathcal{S}_1(Q)$ if 
$Q'\in \mathcal{S}$, $Q'\subset Q$ and there is no cube $Q''\in\mathcal{S}$ with $Q'\subsetneq Q'' \subsetneq Q)$. 
Inductively, define $\mathcal{S}_{k+1}(Q)$ to be those cubes that are in $\mathcal{S}$, are contained in 
some $R\in\mathcal{S}_{k}(Q)$ and are maximal in that cube (i.e. $Q'\in\mathcal{S}_{k+1}(Q)$ if 
$Q'\in\mathcal{S}$, $Q'\subset R$ for some $R\in\mathcal{S}_{k}(Q)$ and there is no $Q''\in\mathcal{S}$ with 
$Q' \subsetneq Q'' \subsetneq R$). Informally, $\mathcal{S}_{k}(Q)$ are the cubes in $\mathcal{S}$ 
that are $k$--generations down in $\mathcal{S}$ from $Q$ (or the $k$--children of $Q$ in $\mathcal{S}$). 

We claim that 
$\abs{\cup_{Q'\in\mathcal{S}_{2^{l+2}}(Q)}Q'}\leq\frac{1}{4}\abs{Q}$. Indeed, 
suppose not; then we would have:
\begin{align*}
\sum_{Q'\in\mathcal{S}:Q'\subsetneq Q}\abs{Q}
>\sum_{k=1}^{2^{l+2}}\sum_{Q'\in\mathcal{S}_{k}(Q)}\abs{Q'}
>\sum_{k=1}^{2^{l+2}}\frac{1}{4}\abs{Q}
= \frac{2^{l+2}}{4}\abs{Q}
= 2^l\abs{Q},
\end{align*}
which violates the Carleson condition. It is now easy to see how to 
separate $\mathcal{S}$ into $2^{l+2}$ sub--collections: let $Q_0$ be the top 
cube in $\mathcal{S}$. For $k=1,\ldots, 2^{l+2}$ let 
\begin{align*}
\mathcal{S}_{k}=
\cup_{n\geq 0}\cup_{Q\in\mathcal{S}_{2^{l+2}n+k}(Q_0)}Q. 
\end{align*}
Thus, for each 
$Q\in\mathcal{S}_k$, the cubes one generation down in $\mathcal{S}_k$ are 
$2^{l+2}$ generations down in $\mathcal{S}$ and so we have the stronger
sparse condition:
\begin{align*}
\abs{\cup_{Q'\in\mathcal{S}_k:Q'\subsetneq Q}Q'}
\leq \frac{1}{4}\abs{Q},
\end{align*}
as desired.
\end{proof}

We now have a variant of the classic Fefferman--Stein Inequality (see also \cite{CruMarPer2011}.)
(Observe that the lemma and its proof are classical and well--known; however, since this maximal 
function only takes a $\sup$ over a subset of $\mathcal{S}$, we can't quote 
earlier results). 
Let $\mathcal{S}$ be a subset of some dyadic lattice and 
define $M_\mathcal{S}f:=\sup_{Q\in\mathcal{S}}\unit_{Q}\avg{f}_{Q}$.
\begin{lm}\label{L:fefst}
For every $f$ and $\lambda > 0$ we have:
\begin{align*}
\lambda w(\{M_\mathcal{S}f > \lambda\})
\leq \int_{\R^d}\abs{f(y)}M_\mathcal{S}w(y)dy.
\end{align*}
\end{lm}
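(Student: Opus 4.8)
The plan is to run the classical Calder\'on--Zygmund selection argument in the dyadic setting. Fix $\lambda>0$ and set $\Omega_\lambda:=\{M^{\mathcal S}_\alpha f>\lambda\}$. Since $M^{\mathcal S}_\alpha$ is built from averages over cubes of a fixed dyadic lattice, $\Omega_\lambda$ is exactly the union of those $Q\in\mathcal S$ with $\alpha_Q\avg{f}_Q>\lambda$ (a supremum of a collection of numbers exceeds $\lambda$ iff one of the numbers does). Among this family I would choose the cubes $\{Q_j\}$ that are maximal with respect to inclusion; by dyadic nesting the $Q_j$ are pairwise disjoint and $\Omega_\lambda=\bigcup_j Q_j$.

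Next I estimate on each selected cube. By the selection, $\lambda<\alpha_{Q_j}\avg{f}_{Q_j}$ for every $j$, so using disjointness of the $Q_j$,
\begin{align*}
\lambda\, w(\Omega_\lambda)=\lambda\sum_j w(Q_j)
\le\sum_j \alpha_{Q_j}\avg{f}_{Q_j}\, w(Q_j)
=\sum_j \int_{Q_j}\abs{f(y)}\,\alpha_{Q_j}\avg{w}_{Q_j}\,dy.
\end{align*}
The key observation is that for each $j$ and every $y\in Q_j$ we have $\alpha_{Q_j}\avg{w}_{Q_j}=\unit_{Q_j}(y)\,\alpha_{Q_j}\avg{w}_{Q_j}\le M^{\mathcal S}_\alpha w(y)$, because $Q_j\in\mathcal S$. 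Substituting this and using once more that the $Q_j$ are disjoint,
\begin{align*}
\lambda\, w(\Omega_\lambda)\le\sum_j\int_{Q_j}\abs{f(y)}\,M^{\mathcal S}_\alpha w(y)\,dy\le\int_{\R^d}\abs{f(y)}\,M^{\mathcal S}_\alpha w(y)\,dy,
\end{align*}
which is the asserted inequality.

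The one point that deserves care --- and the only real obstacle --- is the existence of the maximal cubes $\{Q_j\}$: in a general dyadic lattice a point of $\Omega_\lambda$ might lie in an infinite increasing chain of admissible cubes, and the supremum defining $M^{\mathcal S}_\alpha f$ need not be attained. I would handle this by truncation: for $n\in\N$ let $\mathcal S_n\subset\mathcal S$ consist of the cubes of side length in $[2^{-n},2^n]$, so that any point lies in only finitely many cubes of $\mathcal S_n$ and hence maximal elements exist; moreover $M^{\mathcal S_n}_\alpha f\le M^{\mathcal S}_\alpha f$ and $\{M^{\mathcal S_n}_\alpha f>\lambda\}\uparrow\Omega_\lambda$ as $n\to\infty$. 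Applying the argument above with $\mathcal S$ replaced by $\mathcal S_n$ gives $\lambda\, w(\{M^{\mathcal S_n}_\alpha f>\lambda\})\le\int_{\R^d}\abs{f}\,M^{\mathcal S_n}_\alpha w\le\int_{\R^d}\abs{f}\,M^{\mathcal S}_\alpha w$, and letting $n\to\infty$, using continuity from below of the measure $w\,dx$ on the left side, yields the full statement. (In the applications of this lemma $f$ is bounded with compact support, so one may alternatively just discard the overly large cubes at the outset and argue directly.)
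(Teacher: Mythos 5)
Your argument is correct and follows essentially the same route as the paper: select the maximal cubes of $\mathcal S$ on which $\alpha_Q\avg{f}_Q>\lambda$, use their disjointness, and dominate $\alpha_{Q}\avg{w}_{Q}$ pointwise on $Q$ by $M^{\mathcal S}_\alpha w$. Your extra truncation step to guarantee that maximal cubes exist is a reasonable refinement of a point the paper passes over silently, but it does not change the substance of the proof.
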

\begin{proof}
For $\lambda >0$, let $\Omega_\lambda$ be the maximal cubes in 
$\mathcal{S}$ with $\avg{f}_{Q}>\lambda$. Then using the fact that 
the cubes in $\Omega_\lambda$ are pairwise disjoint, there holds
\begin{align*}
\lambda w(\{M_\mathcal{S}f>\lambda\})
&\leq \sum_{Q\in\Omega_\lambda}\avg{f}_{Q}w(Q)
\\&= \sum_{Q\in\Omega_\lambda}
  \int_{Q}\abs{f(y)}dy\frac{w(Q)}{\abs{Q}}
\\&\leq \int_{\R^d}\abs{f(y)}M_\mathcal{S}w(y)dy,
\end{align*}
as desired.
\end{proof}

There are many ways to define the $[w]_{A_\infty}$ characteristic of a 
weight. The one we use -- and the one that seems most useful and 
popular -- is the one of Wilson \cite{Wil1987}; see also 
\cite{HytPer2015} for more information. 
For a dyadic lattice $\mathcal{D}$, let $M_{\mathcal{D}}$ be the dyadic 
maximal function. For $Q\in\mathcal{D}$ define:
\begin{align*}
\rho_w(Q)
:=\frac{1}{w(Q)}\int_{Q}M_\mathcal{D}(w\unit_{Q})(x)dx
\end{align*}
The following is \cite{HytPer2013}*{Lemma 6.6}:
\begin{lm}\label{L:ainfest}
For a cube $Q\in\mathcal{D}$ and a subset $E\subset Q$ we have
\begin{align*}
w(E)
\lesssim w(Q)\frac{\rho_w(Q)}{\log{\frac{\abs{E}}{\abs{Q}}}}.
\end{align*}
\end{lm}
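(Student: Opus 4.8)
The plan is to obtain the estimate from a quantitative reverse Hölder inequality for $w$ on $Q$ whose exponent is controlled by $\rho_w(Q)$. The first — and main — step is the \emph{local sharp reverse Hölder inequality}: there is a dimensional constant $c_d\ge 1$ such that, setting $\delta=\delta(Q):=\bigl(c_d\,\rho_w(Q)\bigr)^{-1}\in(0,1]$ (legitimate since $\rho_w(Q)\ge 1$ always, as $M(w\one_Q)\ge\avg wQ$ on $Q$), one has
\[
\Bigl(\frac{1}{\abs Q}\int_Q w^{1+\delta}\Bigr)^{1/(1+\delta)}\le 2\,\avg wQ .
\]
This is the Hytönen--Pérez reverse Hölder inequality in its local form; I would prove it by running a Calderón--Zygmund stopping-time construction on $w\one_Q$ inside $Q$, so that only the local $A_\infty$ datum $\int_Q M(w\one_Q)=\rho_w(Q)\,w(Q)$ enters the bookkeeping, which is why $\rho_w(Q)$ rather than $[w]_{A_\infty}$ appears. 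Neither Lemma~\ref{L:fefst} nor Lemma~\ref{L:stronger} is needed here.

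Granting this, the rest is a short computation. By Hölder's inequality with exponents $1+\delta$ and $(1+\delta)/\delta$,
\[
\frac{w(E)}{\abs Q}=\frac{1}{\abs Q}\int_Q w\,\one_E\le\Bigl(\frac{1}{\abs Q}\int_Q w^{1+\delta}\Bigr)^{1/(1+\delta)}\Bigl(\frac{\abs E}{\abs Q}\Bigr)^{\delta/(1+\delta)}\le 2\,\avg wQ\,\Bigl(\frac{\abs E}{\abs Q}\Bigr)^{\delta/(1+\delta)},
\]
so $w(E)\le 2\,w(Q)\,(\abs E/\abs Q)^{\delta/(1+\delta)}$. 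Since $0<\delta\le 1$ we have $\delta/(1+\delta)\ge\delta/2=\bigl(2c_d\rho_w(Q)\bigr)^{-1}$, and rewriting the power as an exponential gives
\[
w(E)\le 2\,w(Q)\exp\!\Bigl(-\frac{1}{2c_d\,\rho_w(Q)}\log\frac{\abs Q}{\abs E}\Bigr).
\]
Applying the elementary bound $e^{-t}\le t^{-1}$ ($t>0$) with $t=\bigl(2c_d\rho_w(Q)\bigr)^{-1}\log(\abs Q/\abs E)$ then yields $w(E)\lesssim w(Q)\,\rho_w(Q)/\log(\abs Q/\abs E)$. (When $\abs E\asymp\abs Q$ the right-hand side is already $\gtrsim w(Q)\ge w(E)$, so the inequality is trivial there; the content is the regime $\abs E\ll\abs Q$, where the paper's $\log$ and the natural logarithm differ by only a bounded factor.)

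The crux is therefore the reverse Hölder step, and specifically its sharpness: one must produce an exponent $\delta$ of size $\asymp 1/\rho_w(Q)$ rather than, say, $e^{-c\rho_w(Q)}$, since a crude exponent would only give a bound with exponential, not linear, dependence on $\rho_w(Q)$. That is exactly where the careful Carleson/stopping-time bookkeeping in the sharp reverse Hölder inequality is required. A closely related alternative, which sidesteps an explicit appeal to reverse Hölder, is to use the duality of $L\log L$ and $\exp L$ on $Q$: writing $w(E)=\int_Q w\,\one_E\lesssim \abs Q\,\norm{w}_{L\log L,Q}\,\norm{\one_E}_{\exp L,Q}$, one computes $\norm{\one_E}_{\exp L,Q}\asymp(\log(\abs Q/\abs E))^{-1}$ directly from the Luxemburg norm, while $\norm{w}_{L\log L,Q}\asymp\abs Q^{-1}\int_Q M(w\one_Q)=\avg wQ\,\rho_w(Q)$ by Stein's classical identification of $\norm{Mf}_{L^1(Q)}$ with $\abs Q\,\norm{f}_{L\log L,Q}$ (the nontrivial direction being $\norm{w}_{L\log L,Q}\lesssim\abs Q^{-1}\norm{M(w\one_Q)}_{L^1(Q)}$); in that approach the ``hard input'' is the Stein estimate, after which the argument is again bookkeeping. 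This is essentially the route of \cite{HytPer2013}*{Lemma 6.6}, from which the statement is quoted.
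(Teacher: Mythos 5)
The paper does not prove this lemma at all --- it is quoted verbatim from \cite{HytPer2013}*{Lemma 6.6} --- so there is no internal argument to compare against; what you have supplied is a self-contained proof, and it is correct. Your main route (local sharp reverse H\"older with exponent $\delta\asymp\rho_w(Q)^{-1}$, then H\"older plus $e^{-t}\le t^{-1}$) is sound: the Hyt\"onen--P\'erez proof of the reverse H\"older inequality is indeed local, using only $\int_Q M(w\one_Q)=\rho_w(Q)\,w(Q)$ through a Calder\'on--Zygmund stopping family inside $Q$, so the exponent really is linear in $\rho_w(Q)^{-1}$ as you insist it must be; and your second route via $L\log L$--$\exp L$ duality together with Stein's identification $\norm{M(w\one_Q)}_{L^1(Q)}\asymp\abs{Q}\,\norm{w}_{L\log L,Q}$ is essentially the argument in the cited source, so either way the lemma rests on standard, correctly invoked inputs rather than on anything circular. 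Two small remarks: you tacitly (and rightly) read the displayed inequality with $\log\frac{\abs{Q}}{\abs{E}}$ in the denominator --- the statement as printed has $\log\frac{\abs{E}}{\abs{Q}}$, which with the paper's convention $\log t=\log_2(2+t)$ would be a bounded quantity and would make the lemma trivial and useless for the application to $w(Q_t)\lesssim 2^{2^r}2^{-k}w(Q)$ in Section \ref{S:pomt}, so your reading is the intended one; and your closing parenthetical correctly disposes of the regime $\abs{E}\asymp\abs{Q}$, where the natural logarithm degenerates but the claimed bound is trivially true because $\rho_w(Q)\ge 1$ and the paper's $\log$ is bounded below by $1$.
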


\section{Proofs of Theorems \ref{T:cor} and \ref{T:main}}\label{S:pomt}
Recall that we are dealing with operators $T$ whose bilinear form has a 
sparse domination. That is, there are $\mathcal{K}<\infty$ sparse sets such 
that 
\begin{align*}
\abs{\ip{Tf_1}{f_2}}
\lesssim \sum_{k=1}^{\mathcal{K}}
         \sum_{Q\in\mathcal{S}_k}\abs{Q}\avg{\abs{f_1}}_{Q}\avg{\abs{f_2}}_{Q}.
\end{align*}
The choice of sparse sets depend on $f_1$, $f_2$ and, $T$, but
$\mathcal{K}$ and the implied constant are independent of $f_1$ and $f_2$, but depend on $T$ and the geometry of the 
space (i.e. $\R^{d}$). 

The rest of the section is used to prove the following proposition, but before 
we prove the proposition, we show how Theorems \ref{T:cor} and 
\ref{T:main} are corollaries of the proposition. (It might 
seem strange that there is a $2^{2^r}$ and $2^r$ in the 
proposition below, instead of $r$ and $\log r$, but when this
proposition is applied, it is applied with $2^{2^r}$ and 
$2^r$).

\begin{prop}\label{P:mp}
Let $w$ be a weight (i.e. non--negative locally integrable function). Then for any 
sparse collection $\mathcal{S}$ with 
$\sup_{Q\in\mathcal{S}}\rho_{w}(Q)\leq 2^{2^r}$ for 
some $r\in\mathbb{N}$ and for all non--negative locally integrable functions $f$ there holds
\begin{align*}
\sup_{\substack{G\subset\R^d\\ 0<w(G)<\infty}}
  \hspace{.01in}
  \inf_{\substack{G'\subset G\\
  w(G)\leq2w(G')}} \sum_{Q\in\mathcal{S}}\abs{Q}\avg{f}_{Q}\avg{w\unit_{G'}}_{Q}
\lesssim 2^r\int_{\R^{d}}\abs{f(y)}M_{\mathcal{S}}w(y)dy. 
\end{align*}
where $M_\mathcal{S}$ is the maximal function restricted to cubes in 
$\mathcal{S}$: 
\begin{align*}
M_{\mathcal{S}}f(x):=\sup_{Q\in\mathcal{S}}\avg{\abs{f}}_{Q}\unit_{Q}(x).
\end{align*}
\end{prop}

\subsection{Deducing the Main Theorems}
First, recall that the weighted weak $L^1$ norm can be computed as follows:
\begin{align*}
\norm{g}_{L^{1,\infty}(\mu)} 
= \sup_{\substack{G\subset\R^d\\ 0<\mu(G)<\infty}}
  \hspace{.01in}
  \inf_{\substack{G'\subset G\\
  \mu(G)\leq2\mu(G')}}\abs{\ip{g}{\mu\unit_{G'}}}.
\end{align*}
Thus, for a weight $w$ and an operator $T$: 
\begin{align}\label{E:wl1}
\norm{Tf}_{L^{1,\infty}(w)} 
= \sup_{\substack{G\subset\R^d\\ 0<w(G)<\infty}}
  \hspace{.01in}
  \inf_{\substack{G'\subset G\\
  w(G)\leq2w(G')}}\abs{\ip{Tf}{w\unit_{G'}}}.
\end{align}
Therefore if $T$ has a bilinear domination as defined in Section \ref{S:intro}: 
\begin{align*}
\abs{\ip{Tf_f}{f_2}}
\lesssim \sum_{k=1}^{\mathcal{K}}\sum_{Q\in\mathcal{S}_k}
    \abs{Q}\avg{f_1}_{Q}\avg{f_2}_{Q},
\end{align*}
then via standard reductions, and \eqref{E:wl1}, it is enough to estimate: 
\begin{align*}
\sup_{\substack{G\subset\R^d\\ 0<w(G)<\infty}}
  \hspace{.01in}
  \inf_{\substack{G'\subset G\\
  w(G)\leq2w(G')}} \sum_{Q\in\mathcal{S}}\abs{Q}\avg{f}_{Q}\avg{w\unit_{G'}}_{Q},
\end{align*}
uniformly over all sparse sets $\mathcal{S}$ and non--negative, compactly supported 
functions $f$. Furthermore, by (for example) the monotone convergence theorem, 
we may assume that there is a finite number of cubes in $\mathcal{S}$ (to avoid 
any convergence issues in the estimates below) and we may assume there is a 
"top cube", that is a cube in $\mathcal{S}$ that contains all other cubes in 
$\mathcal{S}$ (so we can apply Lemma \ref{L:stronger}).

\begin{proof}[Proof of Theorem \ref{T:cor}]
To deduce Theorem \ref{T:cor} from Proposition \ref{P:mp} observe that if 
$w\in A_\infty$ and $2^{2^{r-1}}<[w]_{A_\infty} \leq 2^{2^r}$, then we may take any sparse 
collection 
to be the collection in Proposition \ref{P:mp}. Since $2^r\simeq \log [w]_{A_\infty}$ and 
$Mw(y) \leq [w]_{A_1}w(y)$, Proposition \ref{P:mp} and the above reductions 
assert that: 
\begin{align*}
\norm{T}_{L^{1,\infty}(w)}
\lesssim 
2^r\int_{\R^{d}}\abs{f(y)}M_{\mathcal{S}}w(y)dy 
\lesssim [w]_{A_1}\log [w]_{A_\infty}\int_{\R^{d}}\abs{f(y)}w(y)dy,
\end{align*}
which is Theorem \ref{T:cor}.
\end{proof}

Now we deduce Theorem \ref{T:main} from Proposition \ref{P:mp}.
\begin{proof}[Proof of Theorem \ref{T:main}]
We estimate: 
\begin{align*}
\sup_{\substack{G\subset\R^d\\ 0<w(G)<\infty}}
  \hspace{.01in}
  \inf_{\substack{G'\subset G\\
  w(G)\leq2w(G')}} \sum_{Q\in\mathcal{S}}\abs{Q}\avg{f}_{Q}\avg{w\unit_{G'}}_{Q},
\end{align*}
uniformly over all sparse collections $\mathcal{S}$ and non--negative locally 
integrable functions $f$. Let $\mathcal{S}_r$ be those cubes in 
$\mathcal{S}$ that satisfy $2^{2^{r-1}}<\rho_w(Q)\leq 2^{2^r}$. 
Then by Proposition 
\ref{P:mp} there holds: 
\begin{align}\label{E:mp}
\sup_{\substack{G\subset\R^d\\ 0<w(G)<\infty}}
  \hspace{.01in}
  \inf_{\substack{G'\subset G\\
  w(G)\leq2w(G')}} \sum_{Q\in\mathcal{S}_r}\abs{Q}\avg{f}_{Q}\avg{w\unit_{G'}}_{Q}
\lesssim 2^r\int_{\R^{d}}\abs{f(y)}M_{\mathcal{S}_r}w(y)dy. 
\end{align}
We now have a critical observation. The maximal function 
$M_{\mathcal{S}_r}$ only considers those cubes in $\mathcal{S}_r$. This 
means that for those cubes, $\log \rho_w(Q)\simeq 2^r$ and simiarly, 
using the fact that $\eps$ is increasing, $\eps(2^{2^{r-1}})<
\eps(\rho_w(Q))$. Therefore we may estimate: 
\begin{align*}
2^rM_{\mathcal{S}_r}w(y) 
&= \frac{1}{\eps(2^{2^{r-1}})}\sup_{Q\in\mathcal{S}_r} \avg{w}_{Q}2^r \eps({2^{2^{r-1}}})\unit_{Q}(y)
\\&\simeq \frac{1}{\eps(2^{2^{r-1}})}\sup_{Q\in\mathcal{S}_r} \avg{w}_{Q}
\log(\rho_w(Q))\eps({2^{2^{r-1}}})\unit_{Q}(y)
\\&\leq \frac{1}{\eps(2^{r-1})}\sup_{Q\in\mathcal{S}_r}
    \log(\rho_{w}(Q))\eps(\rho_{w}(Q)) \unit_{Q}(y)
\\&\leq \frac{1}{\eps(2^{r-1})}M_{\eps}w(y).
\end{align*}
Thus, the estimate in \eqref{E:mp} can be continued as 
\begin{align}\label{E:mp1}
\sup_{\substack{G\subset\R^d\\ 0<w(G)<\infty}}
  \hspace{.01in}
  \inf_{\substack{G'\subset G\\
  w(G)\leq2w(G')}} \sum_{Q\in\mathcal{S}_r}\abs{Q}\avg{f}_{Q}\avg{w\unit_{G'}}_{Q}
&\lesssim 2^r\int_{\R^{d}}\abs{f(y)}M_{\mathcal{S}_r}w(y)dy
\\&\lesssim \frac{1}{\eps(2^{r-1})}\int_{\R^{d}}\abs{f(y)}M_{\eps}w(y)dy.
\end{align}
Using the summability condition on $\eps$, this estimate can be summed in 
$r$ to the desired estimate in Theorem \ref{T:main}.
\end{proof}

\subsection{Proving Proposition \ref{P:mp}}
We now turn to proving Proposition \ref{P:mp}. By homogeneity, it 
suffices to prove
\begin{align*}
\sup_{f\geq0:\norm{f}_{L^1(M_{\mathcal{S}}w)}=1}
\sup_{\substack{G\subset\R^d\\ 0<w(G)<\infty}}
  \hspace{.01in}
  \inf_{\substack{G'\subset G\\
  w(G)\leq2w(G')}} \sum_{Q\in\mathcal{S}_r}\abs{Q}\avg{f}_{Q}\avg{w\unit_{G'}}_{Q}
\lesssim 2^r,
\end{align*}
where this estimate is uniform over all sparse collections with 
$\sup_{Q\in\mathcal{S}}\rho_w(Q)\leq 2^{2^r}$.

Fix a set $G$ with $0<w(G)<\infty$ and a compactly supported non--negative 
function $f$ with $\norm{f}_{L^1(M_\mathcal{S} w)}=1$. Since $f$ is bounded and 
compactly supported, we may assume that $f$ is supported on a cube $Q_0\in\mathcal{S}$ and that $\avg{{f}}_{Q_0} < 4w(G)^{-1}$. Furthermore, 
we may assume that $Q_0$ contains all cubes in $\mathcal{S}$.

Let $\mathcal{H}$ be the maximal cubes in $\mathcal{S}$ 
with $\avg{{f}}_{Q} > 4w(G)^{-1}$ and set 
$H=\cup_{Q\in\mathcal{H}}Q$. 
By the Fefferman--Stein Inequality (Lemma \ref{L:fefst}) -- we have 
$w(H)\leq \frac{1}{4}w(G)$. Indeed, taking $\lambda = 4w(G)^{-1}$ and 
noting that $\norm{f}_{L^1(M_\mathcal{S} w)}=1$ there holds
\begin{align*}
w(H)
= w(\{M_{\mathcal{S}}f > \lambda\})
\leq \frac{1}{\lambda}\int_{\R^{d}}\abs{f} M_{\mathcal{S}}w 
\leq \frac{1}{\lambda}\int_{\R^{d}}\abs{f} M_{\mathcal{S}}w 
= \frac{w(G)}{ 4}.
\end{align*}

Set $G' = G\cap H^{c}$ and note that:
\begin{align*}
w(G)
= w(G\cap H) + w(G\cap H^{c})
\leq w(H) + w(G')
\leq \frac{1}{4}w(G) + w(G'),
\end{align*}
and so $w(G) \leq 2w(G')$.

We may now assume that the cubes in $\mathcal{S}$ satisfy $\avg{f}_{Q}\leq 
4w(G)^{-1}$. 
If not then $Q$ is either a cube in $\mathcal{H}$ or is contained in a cube 
in $\mathcal{H}$; either way, $Q$ is contained in $H$. But $\unit_{G'}$ is 
zero on $H$ and so $\avg{\unit_{G'}}_{Q}=0$. Thus, we assume that 
$\avg{f}_{Q}\leq 4w(G)^{-1}$. 

For $k\geq -1$, let 
$$\mathcal{S}_{k}:=\{Q\in\mathcal{S}:4^{-k}w(G)^{-1}<\avg{f}_{Q}\leq
2\cdot4^{-k}w(G)^{-1}\}.$$ 
For fixed $k$, let $S_{k}^{0}$ be the maximal cubes 
in $\mathcal{S}_{k}$ and for $j\geq 1$ set $\mathcal{S}_{k}^{j}$ be the 
maximal cubes in $\mathcal{S}_{k}\setminus\cup_{l=0}^{j-1}\mathcal{S}_{k}^{l}$.
For $Q\in\mathcal{S}_{k}^{j}$ let $E_Q=Q\setminus 
\cup_{Q'\in\mathcal{S}_{k}^{j+1}}Q'$. Observe that the sets 
$\{E_Q:Q\in\mathcal{S}_{k}\}$ are pairwise disjoint. The sparsity condition 
\eqref{E:tradSp} implies: 
\begin{lm}\label{L:comp}
$\int_{Q}\abs{f(y)}dy\simeq\int_{E_Q}\abs{f(y)}dy$.
\end{lm}
\begin{proof}[Proof of Lemma \ref{L:comp}]
If $\mathcal{S}_{k}^{l}(Q)$ are those cubes in 
$\mathcal{S}_{k}^{l+1}$ that are contained in $Q$, there holds: 
\begin{align*}
\int_{Q}\abs{f(y)}dy
&=\int_{E_Q}\abs{f(y)}dy + \sum_{Q'\in\mathcal{S}_{k}^{l}(Q)}\abs{Q'}\avg{\abs{f}}_{Q'}
\\& \leq \int_{E_Q}\abs{f(y)}dy + \sum_{Q'\in\mathcal{S}_{k}^{l}(Q)}\abs{Q'}2\cdot4^{-k}w(G)^{-1}.
\end{align*}
Using the definition of $\mathcal{S}_{k}$ we see that $4^{-k}w(G)^{-1} < \avg{\abs{f}}_{Q}$. This 
combined with the condition \eqref{E:tradSp} allows us to continue the estimate above as: 
\begin{align*}
\int_{Q}\abs{f(y)}dy
&=\int_{E_Q}\abs{f(y)}dy + \sum_{Q'\in\mathcal{S}_{k}^{l}(Q)}\abs{Q'}2\cdot4^{-k}w(G)^{-1}
\\& \leq \int_{E_Q}\abs{f(y)}dy  + \frac{1}{2}\abs{Q}\avg{\abs{f}}_{Q}
\\&= \int_{E_Q}\abs{f(y)}dy + \frac{1}{2}\int_{Q}\abs{f(y)}dy, 
\end{align*}
from which we conclude $\int_{Q}\abs{f(y)}dy\simeq\int_{E_Q}\abs{f(y)}dy$ as desired.
\end{proof}

We break the $k$ sum into two pieces
\begin{align*}
\sum_{k=-1}^{10\cdot 2^R}
\sum_{Q\in\mathcal{S}_r}\abs{Q}\avg{f}_{Q}\avg{w\unit_{G'}}_{Q}
+ \sum_{10\cdot 2^r}^{\infty}
\sum_{Q\in\mathcal{S}_r}\abs{Q}\avg{f}_{Q}\avg{w\unit_{G'}}_{Q}
\end{align*}
and show each piece is controlled by $2^r$ and $1$, 
respectively. We handle each piece in the following two subsections. 

\subsection{Main Portion ($k\leq10\cdot 2^r$)}
Observe: 
\begin{align*}
\abs{Q}\avg{w\unit_{G'}}_{Q} 
= w(G'\cap Q).  
\end{align*}
The goal of this subsection is to prove: 
\begin{lm}\label{L:est1}
\begin{align*}
\sum_{k=-1}^{10\cdot 2^r}\sum_{Q\in\mathcal{S}_{r,k}}\avg{f}_{Q}w(G'\cap Q)
\lesssim 2^r.
\end{align*}
\end{lm}
\begin{proof}[Proof of Lemma \ref{L:est1}]
This follows from the following estimates. 

First, using Lemma \ref{L:comp} and the 
pairwise disjointness of the sets $\{E_Q\}$, for fixed $k$ there holds:
\begin{align*}
\sum_{Q\in\mathcal{S}_{k}}\avg{f}_{Q}w(G'\cap Q)
\simeq \sum_{Q\in\mathcal{S}_{k}}\int_{E_Q}\abs{f(y)}dy \avg{w}_{Q}
\leq \int_{\R^d}\abs{f(y)}M_{\mathcal{S}_k}w(y),
\end{align*}
where $M_{\mathcal{S}_k}$ is the maximal function where the supremum is taken 
over cubes in $\mathcal{S}_k$.
Using the fact that this estimate holds uniformly in $k$, we 
can make the following coarse estimate: 
\begin{align}\label{E:Lest1}
\sum_{k=-1}^{10\cdot 2^r}
\sum_{Q\in\mathcal{S}_{k}}\abs{Q}\avg{f}_{Q}w(G'\cap Q)
\leq 10\cdot 2^r \int_{\R^d}\abs{f(y)}M_{\mathcal{S}}w(y)dy,
\end{align}
and using the fact that $\norm{f}_{L^1(M_{\mathcal{S}}w)}=1$ this completes 
the proof of Lemma \ref{L:est1}.
\end{proof}

\subsection{The Tail ($k\geq 10\cdot2^r$)} The goal of this subsection is to prove 
\begin{lm}\label{L:est3}
\begin{align*}
\sum_{k=10\cdot 2^r}^{\infty}\sum_{Q\in\mathcal{S}_{r,k}}\avg{f}_{Q}w(G'\cap Q)
\lesssim 1.
\end{align*}
\end{lm}

For a cube $Q$ in 
$\mathcal{S}_{k}^{j}$, let $\mathcal{S}_{k}^{j+t}(Q)$ be the cubes in 
$\mathcal{S}_{k}^{j+t}$ contained in $Q$ and define
$Q_{t}:=\cup_{Q'\in\mathcal{S}_{k}^{j+t}(Q)}Q'$ 
where $t=2^k$. The sparse condition implies that $\abs{Q_t}\leq 
4^{-t}\abs{Q}$ and Lemma \ref{L:ainfest} implies that 
$w(Q_t)\lesssim 2^{2^r}2^{-k}w(Q)$. Note that we may write:
\begin{align*}
Q = Q_{t} \cup (\cup_{l=0}^{t-1}\cup_{Q'\in\mathcal{S}_{k}^{j+l}(Q)}E_Q).
\end{align*}

Concerning the $Q_t$ portion, for $Q$ in $\mathcal{S}_{k}$ and Lemma 
\ref{L:comp}:
\begin{align*}
\avg{f}_{Q}w(G'\cap Q_t)
\leq \frac{2^{2^r}}{2^k}\avg{f}_{Q}w(Q)
=\frac{2^{2^r}}{2^k} \int_{Q}\abs{f(y)}dy \avg{w}_{Q}
\simeq \frac{2^{2^r}}{2^k} \int_{E_Q}\abs{f(y)}dy \avg{w}_{Q}.
\end{align*}
(The "$\leq$" is Lemma \ref{L:ainfest} and the "$\simeq$" is 
Lemma \ref{L:comp}).
Thus for fixed $k$ we have -- using the pairwise disjointness of the 
sets $\{E_Q:Q\in\mathcal{S}_{k}\}$:
\begin{align*}
\sum_{Q\in\mathcal{S}_{k}}\avg{{f}}_{Q}w(G'\cap Q_t)
\lesssim \frac{2^{2^r}}{2^k}\int_{\R^d}\abs{f(y)}Mw(y)
\leq \frac{2^{2^r}}{2^k}.
\end{align*}
This can be summed in $k\geq 10\cdot2^r$ to the desired estimate. 

We must now handle the portion involving $Q\setminus Q_t$. Note that 
for fixed $l$ and $k$, the sets $\{E_{Q'}: Q'\in\mathcal{S}_{k}^{j+l}(Q) \textnormal{ and } 
Q\in\mathcal{S}_{k}^{j}; j\geq 0\}$ are pairwise disjoint. 
Thus for fixed $k$ we have:
\begin{align*}
\sum_{j\geq 0}\sum_{Q\in\mathcal{S}_{k}^j}\sum_{l=0}^{t-1}
  \sum_{\substack{Q'\in\mathcal{S}_{k}^{j+l}:\\Q'\subset Q}}
  \avg{f}_{Q}w(G'\cap E_{Q'})
\leq 4^{-k}w(G)^{-1}t\sum_{j\geq 0}\sum_{Q\in\mathcal{S}_{k}^{j}}
  w(G'\cap \widetilde{E}_Q),
\end{align*}
where the sets $\widetilde{E}_Q$ are pairwise disjoint according to the observation
above. Therefore this 
term is bounded by $4^{-k}tw(G)^{-1}w(G')\leq 2^{-k}$. This can be summed 
in $k\geq 10\cdot2^r$ to the desired estimate.


\begin{bibdiv}
\begin{biblist}
\bib{BerFreyPet2017}{article}{
   author={Bernicot, Fr\'ed\'eric},
   author={Frey, Dorothee},
   author={Petermichl, Stefanie},
   title={Sharp weighted norm estimates beyond Calder\'on-Zygmund theory},
   journal={Anal. PDE},
   volume={9},
   date={2016},
   number={5},
   pages={1079--1113},
   issn={2157-5045},
   review={\MR{3531367}},
   doi={10.2140/apde.2016.9.1079},
}

\bib{CoiFef1974}{article}{
   author={Coifman, R. R.},
   author={Fefferman, C.},
   title={Weighted norm inequalities for maximal functions and singular
   integrals},
   journal={Studia Math.},
   volume={51},
   date={1974},
   pages={241--250},
   issn={0039-3223},
   review={\MR{0358205}},
}

\bib{ConEtAl2016}{article}{
   author={Conde-Alonso, Jos\'{e} M.},
   author={Culiuc, Amalia},
   author={Di Plinio, Francesco},
   author={Ou, Yumeng},
   title={A sparse domination principle for rough singular integrals},
   journal={Anal. PDE},
   volume={10},
   date={2017},
   number={5},
   pages={1255--1284}
}

\bib{ConRey2016}{article}{
   author={Conde-Alonso, Jos\'e M.},
   author={Rey, Guillermo},
   title={A pointwise estimate for positive dyadic shifts and some
   applications},
   journal={Math. Ann.},
   volume={365},
   date={2016},
   number={3-4},
   pages={1111--1135},
}

\bib{Cru2015}{article}{
   author={Cruz-Uribe, David},
   title={Two weight inequalities for fractional integral operators and
   commutators},
   conference={
      title={Advanced courses of mathematical analysis VI},
   },
   book={
      publisher={World Sci. Publ., Hackensack, NJ},
   },
   date={2017},
   pages={25--85},
    eprint={http://arxiv.org/abs/1412.4157}
}

\bib{CruMarPer2011}{book}{
   author={Cruz-Uribe, David V.},
   author={Martell, Jos\'{e} Maria},
   author={P\'{e}rez, Carlos},
   title={Weights, extrapolation and the theory of Rubio de Francia},
   series={Operator Theory: Advances and Applications},
   volume={215},
   publisher={Birkh\"{a}user/Springer Basel AG, Basel},
   date={2011},
   pages={xiv+280},
   isbn={978-3-0348-0071-6},
   review={\MR{2797562}},
   doi={10.1007/978-3-0348-0072-3},
}

\bib{CulDipOu2016}{article}{
    author={Culiuc, Amalia},
    author={Di Plinio, Francesco},
    author={Ou, Yumeng},
    title={Uniform sparse domination of singular integrals via dyadic shifts},
    date={2016},
    eprint={https://arxiv.org/abs/1610.01958}
}

\bib{DomLacRey2016}{article}{
   author={Domingo-Salazar, Carlos},
   author={Lacey, Michael},
   author={Rey, Guillermo},
   title={Borderline weak-type estimates for singular integrals and square
   functions},
   journal={Bull. Lond. Math. Soc.},
   volume={48},
   date={2016},
   number={1},
   pages={63--73}
}

\bib{FefSte1971}{article}{
   author={Fefferman, C.},
   author={Stein, E. M.},
   title={Some maximal inequalities},
   journal={Amer. J. Math.},
   volume={93},
   date={1971},
   pages={107--115},
   issn={0002-9327},
   review={\MR{0284802}},
   doi={10.2307/2373450},
}


\bib{HoaMoen2016}{article}{
    author={Hoang, Cong},
    author={Moen, Kabe},
    title={Muckenhoupt-Wheeden conjectures for sparse operators},
    date={2016},
    eprint={https://arxiv.org/abs/1609.03889}
}

\bib{HytPer2013}{article}{
   author={Hyt\"{o}nen, Tuomas},
   author={P\'{e}rez, Carlos},
   title={Sharp weighted bounds involving $A_\infty$},
   journal={Anal. PDE},
   volume={6},
   date={2013},
   number={4},
   pages={777--818},
   issn={2157-5045},
   review={\MR{3092729}},
   doi={10.2140/apde.2013.6.777},
}

\bib{HytPer2015}{article}{
   author={Hyt\"onen, Tuomas},
   author={P\'erez, Carlos},
   title={The $L(\log L)^\epsilon$ endpoint estimate for maximal singular
   integral operators},
   journal={J. Math. Anal. Appl.},
   volume={428},
   date={2015},
   number={1},
   pages={605--626}
}

\bib{Ler}{article}{
   author={Lerner, Andrei K.},
   title={A pointwise estimate for the local sharp maximal function with
   applications to singular integrals},
   journal={Bull. Lond. Math. Soc.},
   volume={42},
   date={2010},
   number={5},
   pages={843--856}
}

\bib{Ler2013}{article}{
   author={Lerner, Andrei K.},
   title={A simple proof of the $A_2$ conjecture},
   journal={Int. Math. Res. Not. IMRN},
   date={2013},
   number={14},
   pages={3159--3170},
   issn={1073-7928},
   review={\MR{3085756}},
}


\bib{Lac2015}{article}{
  author={Lacey, Michael T.},
  title={An Elementary Proof of the $A_2$ Bound},
  date={2015},
  eprint={http://arxiv.org/abs/1501.05818}
}

\bib{LacMen2016}{article}{
  author={Lacey, Michael T.},
  author={Mena, Dar\'io},
  title={The Sparse $T1$ Theorem},
  date={2016},
  eprint={https://arxiv.org/abs/1610.01531}
}


\bib{Per1994}{article}{
   author={P\'erez, C.},
   title={Weighted norm inequalities for singular integral operators},
   journal={J. London Math. Soc. (2)},
   volume={49},
   date={1994},
   number={2},
   pages={296--308}
}

\bib{Reg2011}{article}{
   author={Reguera, Maria Carmen},
   title={On Muckenhoupt-Wheeden conjecture},
   journal={Adv. Math.},
   volume={227},
   date={2011},
   number={4},
   pages={1436--1450},
   issn={0001-8708},
   review={\MR{2799801}},
   doi={10.1016/j.aim.2011.03.009},
}

\bib{RegThi2012}{article}{
   author={Reguera, Maria Carmen},
   author={Thiele, Christoph},
   title={The Hilbert transform does not map $L^1(Mw)$ to $L^{1,\infty}(w)$},
   journal={Math. Res. Lett.},
   volume={19},
   date={2012},
   number={1},
   pages={1--7},
   issn={1073-2780},
   review={\MR{2923171}},
   doi={10.4310/MRL.2012.v19.n1.a1},
}





\bib{TreVol2016}{article}{
   author={Treil, Sergei},
   author={Volberg, Alexander},
   title={Entropy conditions in two weight inequalities for singular
   integral operators},
   journal={Adv. Math.},
   volume={301},
   date={2016},
   pages={499--548}
}

\bib{Wil1987}{article}{
   author={Wilson, J. Michael},
   title={Weighted inequalities for the dyadic square function without
   dyadic $A_\infty$},
   journal={Duke Math. J.},
   volume={55},
   date={1987},
   number={1},
   pages={19--50},
   issn={0012-7094},
   review={\MR{883661}},
   doi={10.1215/S0012-7094-87-05502-5},
}

\end{biblist}
\end{bibdiv}

\end{document}